
\documentclass[reqno]{amsart}

\usepackage{amsmath,amsthm,amssymb,amscd}
%
%
 \newtheorem{thm}{Theorem}[section]
 \newtheorem{cor}[thm]{Corollary}
 \newtheorem{lem}[thm]{Lemma}
 
 \theoremstyle{definition}
 
 \theoremstyle{remark}
 \newtheorem{rem}[thm]{Remark}
 
 \numberwithin{equation}{section}

 \makeatletter
\@namedef{subjclassname@2020}{%
  \textup{2020} Mathematics Subject Classification}
\makeatother
\begin{document}

%
%
%
%
%
%
%
%


\title[$p$-adic quotient sets: diagonal forms]
 {$p$-adic quotient sets: diagonal forms}

\author[Deepa Antony]{Deepa Antony}

\address{Department of Mathematics \\
	Indian Institute of Technology Guwahati \\
	Assam, India, PIN- 781039}

\email{deepa172123009@iitg.ac.in}

\author{Rupam Barman}
\address{Department of Mathematics \\
	Indian Institute of Technology Guwahati \\
	Assam, India, PIN- 781039}
\email{rupam@iitg.ac.in}

\author{Piotr Miska}
\address{Institute of Mathematics\\ Faculty of Mathematics and Computer Science\\ Jagiellonian University\\ Krak\'{o}w, Poland}
\email{piotr.miska@uj.edu.pl}
\subjclass[2020]{Primary 11B05, 11E76, 11E95}

\keywords{$p$-adic number, Quotient set, Ratio set, Binary form, Diagonal form}
\thanks{The research of the third author was partially supported by the grant of the Polish National Science Centre no. UMO-2018/29/N/ST1/00470}

\date{January 12, 2022}

\begin{abstract}
	For a set of integers $A$, we consider $R(A)=\{a/b: a, b\in A, b\neq 0\}$. It is an open problem to study the denseness of $R(A)$ in the $p$-adic numbers when $A$ is the set of nonzero values attained by an integral form. This problem has been answered for quadratic forms. Very recently, Antony and Barman have studied this problem for the diagonal binary cubic forms $ax^3+by^3$, where $a$ and $b$ are integers. In this article, we study this problem for diagonal forms. We extend the results of Antony and Barman to the diagonal binary forms $ax^n+by^n$ for all $n\geq 3$. We also study $p$-adic denseness of quotients of nonzero values attained by diagonal forms of degree $n\geq 3$, where $\gcd(n,p(p-1))=1$.
\end{abstract}
\maketitle
\section{Introduction and statement of results} 
For a set of integers $A$, the set $R(A)=\{a/b:a,b\in A, b\neq 0\}$ is called the ratio set or quotient set of $A$. The major open problem is to characterise all subsets of $\mathbb{N}$ whose ratio sets are dense in the positive real numbers. In this direction, many results have already been obtained, see for example \cite{real-1, real-2, real-3, real-4, real-5, real-6, real-7, real-8, real-9, real-10, real-11, real-12, real-13, real-14}.  An analogous study has also been done for algebraic number fields, see for example \cite{jaitra, algebraic-1, algebraic-2}. 
\par In recent years, this problem has also been considered in the $p$-adic set-up, and many interesting results have already been showed \cite{cubic, Donnay, garciaetal, garcia-luca, miska, piotr, miska-sanna,  Sanna}. For a prime $p$, let $\mathbb{Q}_p$  denote the field of $p$-adic numbers. In \cite{garcia-luca}, Garcia and Luca initiated a study of denseness of the sets $R(A)$, $A\subset\mathbb{N}$, in $p$-adic numbers. They showed that the set of quotients of Fibonacci numbers is dense in $\mathbb{Q}_p$ for all primes $p$. Then Sanna \cite{Sanna} showed that the fractions of $k$-generalised Fibonacci numbers are dense in $\mathbb{Q}_p$. Later Garcia, Hong, Luca, Pinsker, Sanna, Schechter and Starr \cite{garciaetal} further broadened such investigation by considering different kinds of sets.
In \cite{piotr}, Miska, Murru and Sanna studied the denseness of $R(A)$ in $\mathbb{Q}_p$ when $A$ is the image of $\mathbb{N}$ under a polynomial $f\in\mathbb{Z}[X]$ or $A$ is the set of sums of $m$ $n$th powers of integers, where $m,n\in\mathbb{N}$ are fixed. 
\par 
A form of degree $n$ is a homogeneous polynomial
\begin{align*}
F(x_1,x_2, \ldots, x_r)=\sum_{1\leq i_1\leq i_2\leq \ldots\leq i_n\leq r}a_{i_1i_2\ldots i_n}x_{i_1} x_{i_2} \cdots x_{i_n}. 
\end{align*} 
We say that $F$ is integral if $a_{i_1i_2\ldots i_n} \in \mathbb{Z}$ for all $(i_1,i_2,\ldots,i_n)$ and $F$ is primitive if there is no positive integer $d>1$ such that $d\mid  a_{i_1i_2\ldots i_n} \in \mathbb{Z}$ for all $(i_1,i_2,\ldots,i_n)$. 
A form $F$ is said to be isotropic over a field $\mathbb{F}$ if there is a nonzero  vector $\overline{x}\in \mathbb{F}^r$ such that $F(\overline{x}) =0$. Otherwise  $F$ is said to be anisotropic over $\mathbb{F}$. The ratio set generated by an integral form $F$ is 
\begin{align*}
R(F)=\{F(\overline{x})/F(\overline{y}): \overline{x}, \overline{y}\in \mathbb{Z}^r, F(\overline{y})\neq 0\}.
\end{align*} 
It is an open problem to study the denseness of $R(F)$ in the $p$-adic numbers. This problem has been completely answered by Donnay, Garcia and Rouse \cite{Donnay} for quadratic forms. They showed that for a non-singular binary quadratic form $Q$, $R(Q)$ is dense in $\mathbb{Q}_p$ if and only if the discriminant of $Q$ is a nonzero square in $\mathbb{Q}_p$. They also proved that for a non-singular quadratic form $Q$ in at least three variables, $R(Q)$ is always dense in $\mathbb{Q}_p$. Later Miska \cite{miska} gave shorter proof for the same.
\par In a very recent article \cite{cubic}, Antony and Barman have studied the denseness of $R(C)$ in the $p$-adic numbers when $C$ is the diagonal cubic form $C(x,y)=ax^3+by^3$, where $a$ and $b$ are nonzero integers. In this article, we study the denseness of $R(F)$ in the $p$-adic numbers when $F$ is an integral diagonal form. Firstly, we extend the results of \cite{cubic} to the diagonal binary form $F(x,y) = ax^n+by^n$ for all $n\geq 3$.
To be specific, we prove the following result which extends \cite[Theorem 1.2]{cubic}.
For a prime $p$, let $\nu_p$ denote the $p$-adic valuation function. Moreover, for each logic sentence $T$ we put
$$[T]=\begin{cases}1\mbox{ if }T\mbox{ is true},\\0\mbox{ if }T\mbox{ is false}.\end{cases}$$

\begin{thm}\label{thm-3}
	Let $n\geq 3$. Let $F(x,y) = ax^n+by^n$ be integral.
	\begin{enumerate}
		\item If $p\nmid ab$, then $R(F)$ is dense in $\mathbb{Q}_p$ if and only if $-a^{-1}b$ is an $n$th power residue modulo $p^{\nu_p(n)+\nu_p\left(2^{[2\mid n]}\right)+1}$. 
		\item If  $a=p^k\ell$ such that $p\nmid \ell$ and  $n\mid k$, then $R(F)$ is dense in $\mathbb{Q}_p$ if and only if $-b^{-1}\ell$ is an $n$th power residue modulo $p^{\nu_p(n)+\nu_p\left(2^{[2\mid n]}\right)+1}$.
		\item If $a=p^k\ell, p\nmid \ell$ and $n\nmid k$, then $R(F)$ is not dense in $\mathbb{Q}_p$. 
	\end{enumerate}
\end{thm}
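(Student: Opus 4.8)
The plan is to replace the denseness of $R(F)$ by a finite, group-theoretic condition and then to verify that condition in each of the three cases.

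\emph{Reductions and reformulation.} Since $R(\lambda F)=R(F)$ for every nonzero integer $\lambda$ and $R(F)$ is unchanged under swapping the two variables, one may assume $p\nmid\gcd(a,b)$; this leaves precisely case (1) together with the case $p\mid a$, $p\nmid b$ (write $a=p^{k}\ell$), which splits into (2) and (3) according as $n\mid k$ or $n\nmid k$. Because $F$ is homogeneous of degree $n$ one has, for $r=F(\bar x)/F(\bar y)\in R(F)$ and $q=m/k\in\mathbb{Q}^{\times}$, the identity $rq^{n}=F(m\bar x)/F(k\bar y)$, so $R(F)\cdot(\mathbb{Q}^{\times})^{n}\subseteq R(F)$. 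As $(\mathbb{Q}^{\times})^{n}$ is dense in $(\mathbb{Q}_{p}^{\times})^{n}$, which is an open subgroup of $\mathbb{Q}_{p}^{\times}$, the closure $\overline{R(F)}$ is a union of cosets of $(\mathbb{Q}_{p}^{\times})^{n}$ together with $0$. Hence $R(F)$ is dense in $\mathbb{Q}_{p}$ if and only if every coset of $(\mathbb{Q}_{p}^{\times})^{n}$ meets $R(F)$, i.e. if and only if $S\cdot S^{-1}=G$, where $G:=\mathbb{Q}_{p}^{\times}/(\mathbb{Q}_{p}^{\times})^{n}$ and $S\subseteq G$ is the image of $F(\mathbb{Z}^{2})\setminus\{0\}$.

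\emph{Computing $S$ and translating the hypothesis.} Splitting a primitive vector $(x,y)$ according to whether $\nu_p(x)$ is $<$, $>$, or $=\nu_p(y)$, and discarding the trivial classes of powers $p^{n\mathbb{Z}}$, one finds that in case (1) $S$ is the image in $G$ of $\{a,\,b\}\cup\{a+bt^{n}:t\in\mathbb{Z}_{p}^{\times}\}$, and in cases (2)–(3) of the analogous set governed by the exact powers of $p$ dividing the two monomials $ax^{n}$, $by^{n}$. I would then use the standard description: a $p$-adic unit lies in $(\mathbb{Z}_{p}^{\times})^{n}$ exactly when it is an $n$th power residue modulo $p^{\nu_p(n)+\nu_p(2^{[2\mid n]})+1}$. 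Thus the hypothesis of (1) says exactly that $-a^{-1}b\in(\mathbb{Z}_{p}^{\times})^{n}$, equivalently that $a+bt^{n}=0$ has a solution $t\in\mathbb{Z}_{p}^{\times}$, so that $F$ is isotropic over $\mathbb{Q}_{p}$; likewise the hypothesis of (2) says $-b^{-1}\ell\in(\mathbb{Z}_{p}^{\times})^{n}$; and in case (3), comparing $\nu_p(ax^{n})=k+n\nu_p(x)$ with $\nu_p(by^{n})=n\nu_p(y)$ and using $n\nmid k$ shows $F$ is anisotropic over $\mathbb{Q}_p$.

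\emph{The "if" directions.} Assume we are in case (1) or (2) and the residue hypothesis holds, so $F$ has a primitive zero $(x_{0},y_{0})\in\mathbb{Z}_{p}^{2}$; one checks from the valuations that $x_{0}\in\mathbb{Z}_p^{\times}$. Since $\partial F/\partial x=nax^{n-1}$ has valuation $\nu_p(n)+\nu_p(a)$ at $(x_{0},y_{0})$, a Hensel/contraction argument gives that $s\mapsto F(x_{0}+s,y_{0})$ sends $p^{M}\mathbb{Z}_{p}$ onto $p^{\,\nu_p(a)+\nu_p(n)+M}\mathbb{Z}_{p}$ for all large $M$; approximating $x_{0}+s$ and $y_{0}$ by integers shows $\overline{F(\mathbb{Z}^{2})}$ contains a neighbourhood of $0$. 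Dividing by the $p$-adic unit $F(1,0)=a$ in case (1), resp. $F(0,1)=b$ in case (2), shows $\overline{R(F)}$ contains a neighbourhood of $0$; that neighbourhood meets every coset of $(\mathbb{Q}_{p}^{\times})^{n}$ (each such coset contains elements $\alpha p^{jn}$ of arbitrarily large valuation), so by the reformulation $R(F)$ is dense.

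\emph{The "only if" directions and the main difficulty.} For case (3) and for the failure of the residue hypothesis in (1)–(2) one must show $S\cdot S^{-1}\neq G$. When $-a^{-1}b$ is not even an $n$th power residue modulo $p$, every $a+bt^{n}$ ($t\in\mathbb{Z}_{p}^{\times}$) is a $p$-adic unit, so $S$ lies in the image of $\mathbb{Z}_{p}^{\times}$ in $G$ — a subgroup of index $n$ — whence $S\cdot S^{-1}$ lies in a proper subgroup; case (3) is handled similarly by tracking $\nu_p$ modulo $n$. The genuinely delicate situation, and the technical heart of the theorem, is when $-a^{-1}b$ \emph{is} an $n$th power residue modulo $p$ but not modulo $p^{\nu_p(n)+\nu_p(2^{[2\mid n]})+1}$: there one must bound $\max_{t\in\mathbb{Z}_p^{\times}}\nu_p(a+bt^{n})$, show it is at most $\nu_p(a)+\nu_p(n)+\nu_p(2^{[2\mid n]})$, and then verify that the resulting finite collection of classes $[p]^{j}\cdot(\text{unit class})$, together with $[a]$ and $[b]$, still has difference set omitting a coset of $(\mathbb{Q}_{p}^{\times})^{n}$. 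The prime $p=2$ requires separate bookkeeping, since $(\mathbb{Z}_{2}^{\times})^{n}$ for even $n$ is detected only modulo $2^{\nu_2(n)+2}$, which is precisely what the term $\nu_p(2^{[2\mid n]})$ records in the exponent.
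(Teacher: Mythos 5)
Your reduction to the coset condition $S\cdot S^{-1}=G$ with $G=\mathbb{Q}_p^{\times}/(\mathbb{Q}_p^{\times})^{n}$ is correct, and it is a genuinely different framework from the paper's: the paper gets the ``if'' halves from the simple-zero criterion of \cite{piotr} after Lemma \ref{lem-1}, while you argue as in the quadratic-form papers \cite{Donnay,miska}. Your ``if'' directions are sound (a primitive zero with unit first coordinate makes $\overline{F(\mathbb{Z}^2)}$ contain a neighbourhood of $0$, and every coset of the open subgroup $(\mathbb{Q}_p^{\times})^{n}$ meets such a neighbourhood), as is the ``only if'' subcase where $-a^{-1}b$ is not an $n$th power residue modulo $p$. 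The gap is everything you defer: the ``technical heart'' for the delicate case ($-a^{-1}b$ an $n$th power residue modulo $p$ but not modulo $p^{\nu_p(n)+\nu_p(2^{[2\mid n]})+1}$) is only described, not carried out, and the one-line claim that case (3) ``is handled similarly by tracking $\nu_p$ modulo $n$'' breaks down for $n=3$, since then the attainable valuations $\{0,\pm k\}$ already cover all residues modulo $3$ and valuations alone give no proper subgroup containing $S\cdot S^{-1}$.

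Worse, the deferred verification cannot be completed as stated, because in the delicate case the difference set need not omit a coset. Take $p=n=3$ and $F(x,y)=x^3-2y^3$, where $2$ is a cube modulo $3$ but not modulo $9$. The integer values $F(1,0)=1$, $F(0,1)=-2$, $F(1,-1)=3$, $F(2,1)=6$, $F(1,2)=-15$ represent classes in $G\cong(\mathbb{Z}/3)^2$ whose difference set is all of $G$: the three valuation-one values $3,6,-15$ lie in the three distinct cosets of valuation $\equiv 1$, because $1$, $2$, $-5$ are pairwise inequivalent modulo cubes of units (none of $2$, $-5$, $-5/2$ is $\equiv\pm 1\pmod 9$). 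By your own reformulation $R(F)$ is then dense in $\mathbb{Q}_3$, although the residue condition in part (1) fails; similarly $2x^3+y^3$ over $\mathbb{Q}_2$ is dense (this also follows from Theorem \ref{thm-new} with $r=2$) although it satisfies the hypotheses of part (3). So the missing step is not merely unproven but false in these ranges: the ``only if'' halves can only hold under extra hypotheses (for instance $p\nmid n$ in (1)--(2), and the existence of a non-$n$th-power residue modulo $p$, i.e. $\gcd(n,p-1)>1$, in (3)), which is in fact exactly what the paper's own argument implicitly uses --- its converse in (1) is asserted in a single sentence, and its proof of (3) begins by choosing an $m$ that is not an $n$th power residue modulo $p$, which need not exist. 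As written, your proposal proves the ``if'' directions but leaves the converses open, and in the problematic cases no argument can close them without strengthening the hypotheses.
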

We remark that if $n$ is an odd integer, then $-a^{-1}b$ is an $n$th power residue modulo $p^{\nu_p(n)+\nu_p\left(2^{[2\mid n]}\right)+1}$ if and only if $a^{-1}b$ is an $n$th power residue modulo $p^{\nu_p(n)+1}$.
\par 
Let us note that Theorem \ref{thm-3} simplifies if $\gcd(n, p(p-1))=1$.
\begin{cor}\label{cor-new}
Let $n\geq 3$ and $p$ be a prime number such that $\gcd(n, p(p-1))=1$.
\begin{enumerate}
\item Let $F(x,y) = ax^n+by^n$ be integral with $ab\neq 0$ and $\nu_p(a)\equiv\nu_p(b)\pmod{n}$. Then $R(F)$ is dense in $\mathbb{Q}_p$.
\item Let $F(x,y) = a_1x_1^n+a_2x_2^n+\cdots +a_rx_r^n$ be integral with $a_1a_2\cdots a_r\neq 0$ and such that $\nu_p(a_i)\equiv\nu_p(a_j)\pmod{n}$ for some $1\leq i<j\leq r$. Then $R(F)$ is dense in $\mathbb{Q}_p$.
\end{enumerate}
\end{cor}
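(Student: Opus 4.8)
The plan is to derive both statements directly from Theorem \ref{thm-3}, the point being that the hypothesis $\gcd(n,p(p-1))=1$ trivialises the arithmetic condition appearing there.

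\emph{Step 1: simplifying the conclusion of Theorem \ref{thm-3} under $\gcd(n,p(p-1))=1$.} First I would record two elementary consequences of the hypothesis. Since $\gcd(n,p)=1$ we have $\nu_p(n)=0$. Also $n$ is necessarily odd: otherwise $2\mid\gcd(n,p)$ if $p=2$, and $2\mid\gcd(n,p-1)$ if $p$ is odd, in either case contradicting $\gcd(n,p(p-1))=1$. Hence $[2\mid n]=0$, so $2^{[2\mid n]}=1$ and $\nu_p(2^{[2\mid n]})=0$; the modulus in Theorem \ref{thm-3} therefore equals $p^{\nu_p(n)+\nu_p(2^{[2\mid n]})+1}=p$. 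Finally, $\gcd(n,p-1)=1$ makes $t\mapsto t^n$ a bijection of the cyclic group $(\mathbb{Z}/p\mathbb{Z})^\ast$, so every $p$-adic unit is an $n$th power residue modulo $p$. Consequently, whenever the condition in Theorem \ref{thm-3} concerns a $p$-adic unit it is automatically satisfied, and $R(F)$ is dense.

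\emph{Step 2: part (1).} Using $R(cF)=R(F)$ for any nonzero $c$, I would normalise $F$: interchanging $x$ and $y$ if needed we may assume $\nu_p(a)\geq\nu_p(b)$, and then $G:=p^{-\nu_p(b)}F$ is again integral (every coefficient of $F$ is divisible by $p^{\nu_p(b)}$) and has the shape $G(x,y)=p^k\ell x^n+b'y^n$ with $p\nmid\ell b'$ and $k:=\nu_p(a)-\nu_p(b)\geq 0$. The hypothesis $\nu_p(a)\equiv\nu_p(b)\pmod n$ gives $n\mid k$, so Theorem \ref{thm-3}(2) applies: $R(F)=R(G)$ is dense in $\mathbb{Q}_p$ if and only if $-b'^{-1}\ell$ is an $n$th power residue modulo $p$, which holds by Step 1 since $-b'^{-1}\ell$ is a $p$-adic unit.

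\emph{Step 3: part (2).} Setting $x_k=0$ for all $k\notin\{i,j\}$ gives $R(a_ix_i^n+a_jx_j^n)\subseteq R(F)$. The binary form $a_ix_i^n+a_jx_j^n$ is integral with nonzero coefficients and $\nu_p(a_i)\equiv\nu_p(a_j)\pmod n$, so by part (1) its ratio set is dense in $\mathbb{Q}_p$; hence the larger set $R(F)$ is dense as well. Given Theorem \ref{thm-3} there is no genuine obstacle here; the only step needing a little care is the normalisation in Step 2 — tracking which variable carries the larger valuation so that the hypotheses $n\mid k$ and $p\nmid\ell b'$ of Theorem \ref{thm-3}(2) are really in force, and checking that dividing by $p^{\nu_p(b)}$ preserves integrality.
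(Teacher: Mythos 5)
Your proposal is correct and follows essentially the same route as the paper: reduce to Theorem \ref{thm-3}(2) after pulling out the common power of $p$ (your $G=p^{-\nu_p(b)}F$ is the paper's $\widetilde{F}$), observe that $\gcd(n,p(p-1))=1$ forces the modulus to be $p$ and makes every unit an $n$th power residue mod $p$, and obtain part (2) by specialising to the two variables $x_i,x_j$. Your Step 1 merely makes explicit (via $\nu_p(n)=0$ and $n$ odd) a simplification the paper leaves implicit.
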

The second part of the above corollary shows that if $\gcd(n, p(p-1))=1$ and $F$ is an integral diagonal form of degree $n$ depending essentially on at least $n+1$ variables, then $R(F)$ is dense in $\mathbb{Q}_p$. However, this result is not optimal in the sense that $n+1$ is not the least number $r$ such that for every integral diagonal form $F$ of degree $n$ depending essentially on at least $r$ variables the set $R(F)$ is dense in $\mathbb{Q}_p$. This is due to the result below.
\begin{thm}\label{thm-new}
Let $n\geq 3$ and $p$ be a prime number such that $\gcd(n, p(p-1))=1$. Let $F(x,y) = a_1x_1^n+a_2x_2^n+\cdots +a_rx_r^n$ be integral and such that $r>\frac{n}{2}$, $a_1a_2\cdots a_r\neq 0$ and $\nu_p(a_i)\not\equiv\nu_p(a_j)\pmod{n}$ for any $1\leq i<j\leq r$ (in particular $r\leq n$). Then $R(F)$ is dense in $\mathbb{Q}_p$.
\end{thm}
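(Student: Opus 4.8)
The plan is to prove $\overline{R(F)}=\mathbb{Q}_p$, and the key structural input is a description of the $n$th powers in $\mathbb{Q}_p$. Since $\gcd(n,p(p-1))=1$, the $n$th power map is an automorphism of $\mathbb{Z}_p^*$: it is bijective on the torsion subgroup (cyclic of order $p-1$, or of order $2$ when $p=2$) because $\gcd(n,p-1)=1$, and on the torsion-free part $1+p\mathbb{Z}_p$ (resp.\ $1+4\mathbb{Z}_2$), which is isomorphic to $(\mathbb{Z}_p,+)$, because $n$ is a unit of $\mathbb{Z}_p$. Hence every $p$-adic unit is an $n$th power, so
\[
\mathbb{Q}_p^{*n}=\{x\in\mathbb{Q}_p^*:\ n\mid\nu_p(x)\},
\]
the valuation induces an isomorphism $\mathbb{Q}_p^*/\mathbb{Q}_p^{*n}\cong\mathbb{Z}/n\mathbb{Z}$, and $\mathbb{Q}_p^{*n}$ is an open and closed subgroup in which $\mathbb{Q}^{*n}$ is dense, so $\overline{\mathbb{Q}^{*n}}=\mathbb{Q}_p^{*n}$.

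Next I would record the homogeneity identity: for $q=a/b\in\mathbb{Q}^*$ and $\beta=F(\overline{x})/F(\overline{y})\in R(F)$ one has $q^n\beta=F(a\overline{x})/F(b\overline{y})\in R(F)$, using $F(c\overline{z})=c^nF(\overline{z})$ and $F(b\overline{y})=b^nF(\overline{y})\neq0$. Thus $\mathbb{Q}^{*n}\cdot R(F)=R(F)$, and passing to closures (approximating the multiplier by rationals) shows $\overline{R(F)}$ is stable under multiplication by $\overline{\mathbb{Q}^{*n}}=\mathbb{Q}_p^{*n}$; hence every coset $\gamma\,\mathbb{Q}_p^{*n}$ with $\gamma\in R(F)$ lies in $\overline{R(F)}$. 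Since $p^{nk}=F(p^ke_1)/F(e_1)\in R(F)$ for all $k\ge1$, also $0\in\overline{R(F)}$. Therefore it is enough to show that $R(F)$ meets all $n$ cosets of $\mathbb{Q}_p^{*n}$, i.e.\ that for every $d\in\{0,1,\dots,n-1\}$ there is an element of $R(F)$ whose $p$-adic valuation is $\equiv d\pmod n$.

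For this, put $S=\{\nu_p(a_i)\bmod n:\ 1\le i\le r\}\subseteq\mathbb{Z}/n\mathbb{Z}$. The hypothesis that the $\nu_p(a_i)$ are pairwise incongruent modulo $n$ gives $|S|=r$, and $r>n/2$ forces $S-S=\mathbb{Z}/n\mathbb{Z}$: for any $g\in\mathbb{Z}/n\mathbb{Z}$ the sets $S$ and $g+S$ each contain more than $n/2$ of the $n$ elements of $\mathbb{Z}/n\mathbb{Z}$, so they intersect and $g\in S-S$. Given $d$, pick $i,j$ with $\nu_p(a_i)-\nu_p(a_j)\equiv d\pmod n$; evaluating $F$ at standard basis vectors gives $F(e_i)=a_i$ and $F(e_j)=a_j\neq0$, so $a_i/a_j\in R(F)$ with $\nu_p(a_i/a_j)\equiv d\pmod n$. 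This places an element of $R(F)$ in every coset of $\mathbb{Q}_p^{*n}$, and together with the previous paragraph this yields $\overline{R(F)}=\mathbb{Q}_p$.

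I do not anticipate a serious obstacle; the argument is short. The two points that need a moment's thought are the reduction in the second paragraph --- that it suffices for $R(F)$ to meet every valuation class modulo $n$, which rests on the homogeneity of $F$ together with the description of $\mathbb{Q}_p^{*n}$ --- and the elementary covering lemma $|S|>n/2\Rightarrow S-S=\mathbb{Z}/n\mathbb{Z}$ in the third paragraph, which is precisely where the hypotheses $r>n/2$ and pairwise incongruence of the $\nu_p(a_i)$ modulo $n$ are consumed. The inequality $r\le n$ is then automatic, since $\mathbb{Z}/n\mathbb{Z}$ has only $n$ elements.
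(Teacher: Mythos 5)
Your proof is correct and follows essentially the same route as the paper's: both rest on the observation that $\gcd(n,p(p-1))=1$ makes every unit in $\mathbb{Z}_p$ an $n$th power (via Hensel), so the $n$th powers are exactly the elements with valuation divisible by $n$, together with the pigeonhole step that two subsets of $\mathbb{Z}/n\mathbb{Z}$ of size $r>\frac{n}{2}$ must intersect, which puts a quotient $a_i/a_j\in R(F)$ in every valuation class modulo $n$. The only cosmetic difference is in how density is transferred: the paper shows $R(F(\mathbb{Z}_p^r))=\mathbb{Q}_p$ and invokes density of $\mathbb{Z}$ in $\mathbb{Z}_p$, whereas you stay with integer points and scale by rational $n$th powers using homogeneity of $F$.
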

As an immediate consequence of Corollary \ref{cor-new}(2) and Theorem \ref{thm-new} we get the following.
\begin{cor}\label{cor-new2}
Let $n\geq 3$ and $p$ be a prime number such that $\gcd(n, p(p-1))=1$. The number $\left\lfloor\frac{n}{2}\right\rfloor +1=\left\lceil\frac{n+1}{2}\right\rceil$ is the least number $r$ such that for every integral diagonal form $F$ of degree $n$ depending essentially on at least $r$ variables the set $R(F)$ is dense in $\mathbb{Q}_p$.
\end{cor}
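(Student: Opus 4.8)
The plan is to establish the two halves of the statement separately: that every integral diagonal form $F$ of degree $n$ depending essentially on at least $\lfloor n/2\rfloor+1$ variables has $R(F)$ dense in $\mathbb{Q}_p$, and that $\lfloor n/2\rfloor$ variables do not suffice, i.e., that some such form in $\lfloor n/2\rfloor$ variables fails to be dense. The identity $\lfloor n/2\rfloor+1=\lceil(n+1)/2\rceil$ is a one-line parity check. Throughout I use the elementary fact that a dense subset $S\subseteq\mathbb{Q}_p$ meets every valuation: if $x\in\mathbb{Q}_p^\times$ and $s\in S$ with $|s-x|_p<|x|_p$, then $\nu_p(s)=\nu_p(x)$, so if $S$ is dense then $\{\nu_p(s):s\in S\setminus\{0\}\}=\mathbb{Z}$.

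For the sufficiency, suppose $F$ depends essentially on at least $\lfloor n/2\rfloor+1$ variables. Since a variable with zero coefficient does not occur in $F$, we may assume $F=a_1x_1^n+\cdots+a_rx_r^n$ with $a_1\cdots a_r\neq0$ and $r\geq\lfloor n/2\rfloor+1$; in particular $r>n/2$ regardless of the parity of $n$. If $\nu_p(a_i)\equiv\nu_p(a_j)\pmod n$ for some $i\neq j$, then $R(F)$ is dense by Corollary \ref{cor-new}(2). Otherwise the residues $\nu_p(a_1),\ldots,\nu_p(a_r)$ are pairwise distinct modulo $n$ and $r>n/2$, so Theorem \ref{thm-new} applies and $R(F)$ is again dense. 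This half is thus immediate from the two quoted results.

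The content is the optimality, for which I would exhibit a single integral diagonal form of degree $n$ in exactly $\lfloor n/2\rfloor$ variables whose quotient set is not dense. Put $r=\lfloor n/2\rfloor$ and $F=\sum_{i=1}^{r}p^{\,i-1}x_i^n$; since every coefficient is nonzero, this depends essentially on all $r$ variables. The key point is to control the valuations attained by the nonzero values of $F$: for $\overline{x}\in\mathbb{Z}^r$ with $F(\overline{x})\neq0$, the nonzero summands $p^{\,i-1}x_i^n$ have $p$-adic valuations $(i-1)+n\nu_p(x_i)$, which lie in pairwise distinct residue classes modulo $n$ because $0,1,\ldots,r-1$ are distinct modulo $n$ (here $r\leq n$); hence the minimum of these valuations is attained at a unique index $i_0$, and by the ultrametric inequality $\nu_p(F(\overline{x}))=(i_0-1)+n\nu_p(x_{i_0})\equiv i_0-1\pmod n$. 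Therefore $\nu_p(F(\overline{x}))\bmod n\in\{0,1,\ldots,r-1\}$, so every nonzero element of $R(F)$ has valuation in the difference set $\{-(r-1),\ldots,r-1\}\pmod n$, which consists of at most $2r-1\leq n-1$ residues and so is a proper subset of $\mathbb{Z}/n\mathbb{Z}$. Picking $c$ in an omitted residue class, $R(F)$ contains no element of valuation $c$, hence is not dense by the fact recorded above; this shows that no $r\leq\lfloor n/2\rfloor$ can work, and together with the sufficiency half it gives the claimed minimality. I expect the only genuine work to be the valuation bookkeeping in this last paragraph, especially the uniqueness of the minimizing index $i_0$; the rest is a direct appeal to Corollary \ref{cor-new}(2) and Theorem \ref{thm-new}.
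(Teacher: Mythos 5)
Your proposal is correct and follows essentially the same route as the paper: the density half is the same case split between Corollary \ref{cor-new}(2) and Theorem \ref{thm-new}, and the optimality half uses the identical counterexample $F_0=\sum_{i=1}^{\lfloor n/2\rfloor}p^{i-1}x_i^n$ with the same valuation argument via Lemma \ref{lem2}. The only cosmetic difference is that you rule out a residue class by a cardinality count of the difference set ($2r-1\leq n-1$), whereas the paper names the omitted residue $\lfloor n/2\rfloor$ explicitly; both work.
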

Next, we give a sufficient condition for the denseness of the ratio set of the values attained by a diagonal cubic form. In \cite[Theorem 1.8]{piotr}, Miska, Murru and Sanna proved that the ratio set of $\{x_1^3+\cdots + x_r^3: x_1, \ldots, x_r\in \mathbb{Z}_{\geq 0}\}$ is dense in $\mathbb{Q}_p$ for all $r\geq 2$. The following results also generalise this fact.
\begin{thm}\label{thm-5}
 Let $C(x,y,z)=ax^3+by^3+cz^3$ be integral. If $p\neq3$ and $p\nmid abc$, then $R(C)$ is dense in $\mathbb{Q}_p.$
 \end{thm}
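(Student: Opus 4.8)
The plan is to find a nontrivial zero of $C$ modulo $p$, lift it to a zero $\overline{v}\in\mathbb{Z}_p^3$ with a unit coordinate, and use it to realize every element of $p\mathbb{Z}_p$ as a $p$-adic limit of values $C(\overline{x})$ with $\overline{x}\in\mathbb{Z}^3$; denseness of $R(C)$ then follows by forming ratios.

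\emph{Step 1 (a nontrivial zero mod $p$).} Since $p\neq3$ and $p\nmid abc$, the gradient $(3ax^2,3by^2,3cz^2)$ of $C$ vanishes only at the origin, so $C$ defines a smooth projective plane cubic, hence a curve of genus $1$, over $\mathbb{F}_p$. By the Hasse--Weil bound it has at least $p+1-2\sqrt p=(\sqrt p-1)^2>0$ points over $\mathbb{F}_p$, so there is $\overline{v}_0\in\mathbb{F}_p^3\setminus\{0\}$ with $C(\overline{v}_0)\equiv0\pmod p$; such a point is automatically nonsingular, so some coordinate of $\overline{v}_0$ is a unit mod $p$. After permuting $x,y,z$ (and relabelling $a,b,c$) we may assume $p\nmid v_{0,1}$, hence $3av_{0,1}^2\not\equiv0\pmod p$, and Hensel's lemma applied to $x_1\mapsto C(x_1,v_{0,2},v_{0,3})$ produces $\overline{v}=(v_1,v_2,v_3)\in\mathbb{Z}_p^3$ with $v_1\in\mathbb{Z}_p^\ast$, $\overline{v}\equiv\overline{v}_0\pmod p$, and $C(\overline{v})=0$.

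\emph{Step 2 (the values of $C$ cover $p\mathbb{Z}_p$).} From $C(\overline{v})=0$ we get $bv_2^3+cv_3^3=-av_1^3$, so $C(x_1,v_2,v_3)=a\left(x_1^3-v_1^3\right)$ for all $x_1\in\mathbb{Z}_p$. Writing $x_1=v_1(1+ps)$ with $s\in\mathbb{Z}_p$, one computes $x_1^3-v_1^3=v_1^3\,p\,s\,(3+3ps+p^2s^2)$, and since $p\neq3$ the factor $3+3ps+p^2s^2$ is a unit; a short contraction-mapping argument shows $s\mapsto s(3+3ps+p^2s^2)$ maps $\mathbb{Z}_p$ onto $\mathbb{Z}_p$. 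Hence $\left\{C(x_1,v_2,v_3):x_1\in v_1+p\mathbb{Z}_p\right\}=p\mathbb{Z}_p$. Because $C$ has integer coefficients, approximating $x_1,v_2,v_3$ by integers modulo a high power of $p$ yields: for every $\tau\in p\mathbb{Z}_p$ and every $N$, there is $\overline{x}\in\mathbb{Z}^3$ with $\nu_p(C(\overline{x})-\tau)\geq N$. In particular, over $\mathbb{Z}^3$, $C$ attains values of every valuation $k\geq1$, with the unit part arbitrarily prescribable in $\mathbb{Z}_p^\ast$ at each such valuation.

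\emph{Step 3 (ratios).} Let $\alpha\in\mathbb{Q}_p$ and $N\in\mathbb{N}$. If $\alpha=0$, pick $\overline{x},\overline{y}\in\mathbb{Z}^3$ with $\nu_p(C(\overline{x}))\geq N+1$ and $\nu_p(C(\overline{y}))=1$; then $\nu_p\!\left(C(\overline{x})/C(\overline{y})\right)\geq N$. If $\alpha=p^k u$ with $u\in\mathbb{Z}_p^\ast$, write $k=k_1-k_2$ with $k_1,k_2\geq1$ and choose $\overline{x},\overline{y}\in\mathbb{Z}^3$ with $C(\overline{x})\equiv p^{k_1}u$ and $C(\overline{y})\equiv p^{k_2}$ modulo a sufficiently high power of $p$; then $C(\overline{x})/C(\overline{y})$ lies within $p^{-N}$ of $\alpha$. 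Hence $R(C)$ is dense in $\mathbb{Q}_p$. The only substantive ingredient is the existence of a nontrivial zero of $C$ modulo $p$ in Step 1 — Steps 2 and 3 are just Hensel's lemma, homogeneity and bookkeeping — and I expect packaging Step 1 together with the lifting to be the most delicate point. One can avoid Hasse--Weil: for $p\equiv2\pmod3$ cubing is a bijection of $\mathbb{F}_p^\ast$ (and this case is in any event covered by Corollary \ref{cor-new}(2)), while for $p\equiv1\pmod3$ the sets $\{ax^3:x\in\mathbb{F}_p\}$ and $\{-by^3-cz^3:y,z\in\mathbb{F}_p\}$ have cardinalities summing to at least $p+1$ (Cauchy--Davenport), hence intersect, and a brief application of Vosper's theorem — invoking Theorem \ref{thm-3} for the binary subform $ax^3+by^3$ in the remaining case where two of $a,b,c$ coincide modulo cubes — rules out the degenerate possibility that they meet only at $0$.
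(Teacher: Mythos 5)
Your argument is correct, but it reaches the conclusion by a genuinely different route than the paper. The paper's proof is two lines long because it outsources both halves: the existence of a non-singular zero of $C$ modulo $p$ comes from Heath-Brown's elementary Lemma~\ref{lem-7} (which applies to the diagonal cubic since $p\nmid 3abc$), and the passage from a simple $\mathbb{Z}_p$-zero of the one-variable polynomial $C(x,y_0,z_0)$ to denseness of the quotient set is a direct citation of Theorem~\ref{thm6} (Corollary~1.3 of \cite{piotr}). You instead get the mod-$p$ point from the Hasse--Weil bound for the smooth genus-$1$ plane curve $C=0$ (heavier machinery, though the smoothness check and the bound $(\sqrt p-1)^2>0$ are fine for all $p$), and you then reprove by hand the special case of Theorem~\ref{thm6} you need: your factorization $x_1^3-v_1^3=v_1^3ps(3+3ps+p^2s^2)$ with $3+3ps+p^2s^2$ a unit, the surjectivity of $s\mapsto s(3+3ps+p^2s^2)$ via Hensel, the approximation of $\mathbb{Z}_p$-points by integer points, and the ratio bookkeeping in Step~3 are all correct. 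What your approach buys is self-containedness (no appeal to \cite{piotr}) at the cost of invoking Hasse--Weil; what the paper's buys is brevity and an elementary input (Heath-Brown's lemma also handles some non-diagonal cubics, which is why the paper prefers it). One caution: your proposed Hasse--Weil-free alternative via Cauchy--Davenport and Vosper is only sketched --- ruling out the degenerate case where $\{ax^3\}$ and $\{-by^3-cz^3\}$ meet only at $0$ is exactly the delicate point and is not actually carried out --- but since it is offered as an optional variant, the main proof does not depend on it.
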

Using Theorem \ref{thm-5}, we have the following corollary.
\begin{cor}\label{cor-1}
	Let $C(x_1,x_2, \ldots, x_r)=a_1x_1^3+a_2x_2^3+\cdots +a_rx_r^3$ be integral. Suppose that $p\neq3$ and $\nu_p(a_i)\equiv \nu_p(a_j)\equiv \nu_p(a_k)\pmod{3}$ for some $i, j,k\in \{1, \ldots, r\}$, $i<j<k$. 
	Then $R(C)$ is dense in $\mathbb{Q}_p$.
\end{cor}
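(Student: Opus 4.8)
The plan is to reduce Corollary \ref{cor-1} to the ternary case settled by Theorem \ref{thm-5}. First I would discard all variables except the three indicated ones: since the hypothesis forces $a_i,a_j,a_k\neq 0$, set $x_\ell=0$ for every $\ell\notin\{i,j,k\}$, so that every value of the ternary form $D(u,v,w)=a_iu^3+a_jv^3+a_kw^3$ is also a value of $C$. Hence $R(D)\subseteq R(C)$, and it suffices to prove that $R(D)$ is dense in $\mathbb{Q}_p$.

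Next I would clear the $p$-part of the coefficients. Write $a_i=p^{\alpha_i}\ell_i$, $a_j=p^{\alpha_j}\ell_j$, $a_k=p^{\alpha_k}\ell_k$ with $p\nmid\ell_i\ell_j\ell_k$. By hypothesis $\alpha_i\equiv\alpha_j\equiv\alpha_k\pmod 3$, so we may write $\alpha_i=3\beta_i+t$, $\alpha_j=3\beta_j+t$, $\alpha_k=3\beta_k+t$ for a common residue $t\in\{0,1,2\}$ and nonnegative integers $\beta_i,\beta_j,\beta_k$. Substituting $u=p^{\beta_i}U$, $v=p^{\beta_j}V$, $w=p^{\beta_k}W$ gives $D(p^{\beta_i}U,p^{\beta_j}V,p^{\beta_k}W)=p^t\bigl(\ell_iU^3+\ell_jV^3+\ell_kW^3\bigr)$. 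Thus the value set of $D'(U,V,W)=\ell_iU^3+\ell_jV^3+\ell_kW^3$, multiplied by the fixed constant $p^t$, is contained in the value set of $D$. Since scaling all values of a form by a fixed nonzero constant leaves its quotient set unchanged, we obtain $R(D')\subseteq R(D)\subseteq R(C)$.

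Finally, $D'$ is an integral diagonal ternary cubic form with coefficients $\ell_i,\ell_j,\ell_k$ all coprime to $p$, and $p\neq 3$ by assumption, so Theorem \ref{thm-5} applies and shows that $R(D')$ is dense in $\mathbb{Q}_p$. Combined with the inclusion $R(D')\subseteq R(C)$ this yields that $R(C)$ is dense in $\mathbb{Q}_p$, as required.

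I do not expect a real obstacle: the argument is a routine ``restrict to a subform and clear cubes from the coefficients'' reduction. The only points that need care are recording that passing to a subform (by zeroing the remaining variables) and rescaling the value set by a nonzero constant both keep the quotient set inside $R(C)$, and matching the common residue $t$ of the three valuations so that the substitution $x_\ell\mapsto p^{\beta_\ell}x_\ell$ factors out a single global power $p^t$; after that, everything is an immediate appeal to Theorem \ref{thm-5}.
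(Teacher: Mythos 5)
Your overall route is the paper's: restrict to the ternary subform $D(u,v,w)=a_iu^3+a_jv^3+a_kw^3$, strip the powers of $p$ from its coefficients by a change of variables of the shape $x\mapsto p^{\bullet}x$, and invoke Theorem \ref{thm-5} for the resulting form $D'(U,V,W)=\ell_iU^3+\ell_jV^3+\ell_kW^3$ with unit coefficients. However, your substitution goes in the wrong direction, so the key displayed identity is false. With $a_i=p^{3\beta_i+t}\ell_i$ one has $a_i\bigl(p^{\beta_i}U\bigr)^3=p^{6\beta_i+t}\ell_iU^3$, hence
$D\bigl(p^{\beta_i}U,p^{\beta_j}V,p^{\beta_k}W\bigr)=p^t\bigl(p^{6\beta_i}\ell_iU^3+p^{6\beta_j}\ell_jV^3+p^{6\beta_k}\ell_kW^3\bigr)$,
not $p^t\,D'(U,V,W)$: multiplying a variable by $p^{\beta_i}$ increases, rather than cancels, the power of $p$ attached to that coefficient. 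As written, the argument only covers the case $\beta_i=\beta_j=\beta_k$ (for instance, equal valuations), since Theorem \ref{thm-5} does not apply to the form $p^{6\beta_i}\ell_iU^3+p^{6\beta_j}\ell_jV^3+p^{6\beta_k}\ell_kW^3$, whose coefficients are in general divisible by $p$.

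The gap is easy to repair, and the repair is essentially what the paper does. Either set $\beta=\max\{\beta_i,\beta_j,\beta_k\}$ and substitute $u=p^{\beta-\beta_i}U$, $v=p^{\beta-\beta_j}V$, $w=p^{\beta-\beta_k}W$; these are integral substitutions and give $D\bigl(p^{\beta-\beta_i}U,p^{\beta-\beta_j}V,p^{\beta-\beta_k}W\bigr)=p^{3\beta+t}\,D'(U,V,W)$, whence $R(D')\subseteq R(D)\subseteq R(C)$. Or, as in the paper's proof, order the valuations (say $\alpha_i\geq\alpha_j\geq\alpha_k$), use the nonpositive exponents $(\alpha_k-\alpha_i)/3$ and $(\alpha_k-\alpha_j)/3$ (so rational points), and note that $R\bigl(D(\mathbb{Q}^3)\bigr)=R\bigl(D(\mathbb{Z}^3)\bigr)$ because $D$ is homogeneous, so denominators cancel in quotients. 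With either correction the rest of your argument -- the inclusion $R(D)\subseteq R(C)$ obtained by zeroing the other variables, the invariance of the quotient set under multiplying all values by a fixed nonzero constant, and the appeal to Theorem \ref{thm-5} with $p\neq 3$ and $p\nmid\ell_i\ell_j\ell_k$ -- goes through verbatim.
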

We also have the following corollary which follows from Corollary \ref{cor-1}.
\begin{cor}\label{cor-2}
For each $p\neq 3$ and every integral diagonal cubic form \linebreak$C(x_1,x_2,\ldots ,x_r) = a_1x_1^3+a_2x_2^3+\cdots +a_rx_r^3$ with $a_1a_2\cdots a_r\neq 0$ and $r\geq 7$ the set $R(C)$ is dense in $\mathbb{Q}_p$.
\end{cor}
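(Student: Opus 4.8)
The plan is to derive Corollary~\ref{cor-2} as an immediate consequence of Corollary~\ref{cor-1} together with the pigeonhole principle, so that all of the genuine mathematical content stays hidden inside Theorem~\ref{thm-5}. Since $a_1a_2\cdots a_r\neq 0$, every valuation $\nu_p(a_i)$ is a finite nonnegative integer, hence the residues $\nu_p(a_1)\bmod 3,\dots,\nu_p(a_r)\bmod 3$ are well-defined elements of $\{0,1,2\}$. This finiteness is exactly the role played by the hypothesis $a_1a_2\cdots a_r\neq 0$.

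Next I would apply the pigeonhole principle: partition the index set $\{1,\dots,r\}$ into the (at most) three classes determined by the value of $\nu_p(a_i)\bmod 3$. Because $r\geq 7>2\cdot 3$, at least one of these three classes contains at least $\lceil 7/3\rceil=3$ indices. Picking three of them, say $i<j<k$, gives $\nu_p(a_i)\equiv\nu_p(a_j)\equiv\nu_p(a_k)\pmod 3$. Since $p\neq 3$ by hypothesis, Corollary~\ref{cor-1} applies to the triple $(i,j,k)$ and yields that $R(C)$ is dense in $\mathbb{Q}_p$, which is the assertion.

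I do not anticipate any real obstacle; the only things to check are the two observations above — that $a_1a_2\cdots a_r\neq 0$ makes the three-class partition meaningful, and that $r\geq 7=2\cdot 3+1$ is precisely the pigeonhole threshold forcing three indices into one residue class modulo $3$. It is worth noting in the writeup that the bound $r=7$ is sharp for this argument: for $r=6$ one may take valuations $0,0,1,1,2,2$, so that no three coincide modulo $3$, and Corollary~\ref{cor-1} no longer applies (though this does not rule out denseness of $R(C)$ for such forms by other means).
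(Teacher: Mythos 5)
Your argument is correct and is essentially the paper's own proof: pigeonhole on the residues $\nu_p(a_i)\bmod 3$ (using $r\geq 7$ and $a_1\cdots a_r\neq 0$) to find $i<j<k$ with $\nu_p(a_i)\equiv\nu_p(a_j)\equiv\nu_p(a_k)\pmod 3$, then invoke Corollary~\ref{cor-1}. The only cosmetic difference is that the paper applies Corollary~\ref{cor-1} to the subform $a_ix_i^3+a_jx_j^3+a_kx_k^3$ and uses $R(a_ix_i^3+a_jx_j^3+a_kx_k^3)\subset R(C)$, while you apply it directly to $C$; both are valid.
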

Studying denseness of $R(F)$ when $F$ is any integral form seems to be a difficult problem. Our last result is on any form $F$ which is anisotropic modulo $p$, and this extends Theorem 1.1 of \cite{cubic}. To be specific, we prove the following result.
\begin{thm}\label{thm-4}
	Let $F$ be any form of degree $n\geq2$ that is primitive and integral. If $F$ is anisotropic modulo $p$, then $R(F)$ is not dense in $\mathbb{Q}_p.$
\end{thm}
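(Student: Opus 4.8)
The plan is to use the anisotropy of $F$ modulo $p$ to constrain the $p$-adic valuations of the nonzero values of $F$, and then to produce an explicit $p$-adic ball disjoint from $R(F)$.

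First I would establish the key valuation identity: for every nonzero $\overline{x}\in\mathbb{Z}^r$ one has $\nu_p(F(\overline{x}))\equiv 0\pmod n$. Indeed, setting $k=\min_{1\le i\le r}\nu_p(x_i)$ (a finite nonnegative integer, since $\overline{x}\neq\overline{0}$), we may write $\overline{x}=p^k\overline{x}'$ with $\overline{x}'\in\mathbb{Z}^r$ reducing to a nonzero vector modulo $p$. Homogeneity of degree $n$ gives $F(\overline{x})=p^{nk}F(\overline{x}')$, while anisotropy modulo $p$ forces $F(\overline{x}')\not\equiv 0\pmod p$, that is, $\nu_p(F(\overline{x}'))=0$; hence $\nu_p(F(\overline{x}))=nk$. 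In particular $F(\overline{x})\neq 0$, so $F$ is anisotropic over $\mathbb{Q}_p$, and $F(\overline{x})=0$ with $\overline{x}\in\mathbb{Z}^r$ holds only for $\overline{x}=\overline{0}$.

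Next, I would deduce that every element of $R(F)$ is either $0$ or has $p$-adic valuation divisible by $n$: for $\overline{x},\overline{y}\in\mathbb{Z}^r$ with $F(\overline{y})\neq 0$, either $\overline{x}=\overline{0}$ and $F(\overline{x})/F(\overline{y})=0$, or $\nu_p\!\left(F(\overline{x})/F(\overline{y})\right)=\nu_p(F(\overline{x}))-\nu_p(F(\overline{y}))\equiv 0\pmod n$. Finally, since $n\geq 2$, the point $p\in\mathbb{Q}_p$ has valuation $1\not\equiv 0\pmod n$, and every $z\in\mathbb{Q}_p$ with $\nu_p(z-p)\geq 2$ satisfies $z\neq 0$ and, by the ultrametric law, $\nu_p(z)=\min\{\nu_p(z-p),1\}=1$. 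Thus the ball $\{z\in\mathbb{Q}_p:\nu_p(z-p)\geq 2\}$ meets $R(F)$ in the empty set, so $R(F)$ is not dense in $\mathbb{Q}_p$.

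This argument is short and I do not anticipate a genuine obstacle; the only place demanding slight care is the reduction $\overline{x}=p^k\overline{x}'$ together with the observation that anisotropy modulo $p$ (which in particular rules out $p$ dividing every coefficient, so primitivity is used only through this mild consequence) guarantees that $F(\overline{x}')$ is a $p$-adic unit. Everything else is the ultrametric inequality applied to a single well-chosen ball.
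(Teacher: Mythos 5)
Your proof is correct and takes essentially the same route as the paper: both derive $\nu_p(F(\overline{x}))=nk$ with $k=\min_i\nu_p(x_i)$ from homogeneity together with anisotropy modulo $p$, so every nonzero element of $R(F)$ has valuation divisible by $n\geq 2$. The only cosmetic difference is that where the paper invokes its Lemma \ref{lem2} (a dense set attains every finite valuation), you exhibit the explicit ball $\{z:\nu_p(z-p)\geq 2\}$ missed by $R(F)$, which amounts to reproving that lemma in this special case.
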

 \section{Preliminaries}
For a prime number $p$, every nonzero rational number $r$ has a unique representation of the form $r= \pm p^k a/b$, where $k\in \mathbb{Z}, a, b \in \mathbb{N}$ and $\gcd(a,p)= \gcd(p,b)=\gcd(a,b)=1$. 
The $p$-adic valuation of such an $r$ is $\nu_p(r)=k$ and its $p$-adic absolute value is $\|r\|_p=p^{-k}$. By convention, $\nu_p(0)=\infty$ and $\|0\|_p=0$. The $p$-adic metric on $\mathbb{Q}$ is given by the formula $d(x,y)=\|x-y\|_p$. 
The field $\mathbb{Q}_p$ of $p$-adic numbers is the completion of $\mathbb{Q}$ with respect to the $p$-adic metric. We denote by $\mathbb{Z}_p$ the ring of $p$-adic integers which is the set of elements of $\mathbb{Q}_p$ with $p$-adic norm less than or equal to 1. The unit group of $\mathbb{Z}_p$ is denoted by $\mathbb{Z}_p^{\times}$.
\par We next state a few results which will be used in the proofs of our theorems.
\begin{lem}\label{lem2}\cite[Lemma 2.1]{garciaetal}
	If $S$ is dense in $\mathbb{Q}_p$, then for each finite value of the $p$-adic valuation, there is an element of $S$ with that valuation.
\end{lem}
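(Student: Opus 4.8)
The plan is to fix an arbitrary finite target valuation $k\in\mathbb{Z}$ and produce a single element of $S$ whose valuation is exactly $k$, by approximating a canonical element of that valuation and invoking the ultrametric property of $\mathbb{Q}_p$. Concretely, I would take the point $p^k\in\mathbb{Q}_p$, which satisfies $\nu_p(p^k)=k$, and use the hypothesis that $S$ is dense to choose some $s\in S$ lying in the open ball of radius $p^{-k}$ about $p^k$; that is, $s\in S$ with $\|s-p^k\|_p<p^{-k}$, equivalently $\nu_p(s-p^k)>k$.

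The key step is then a one-line application of the strong (isosceles) triangle inequality for the $p$-adic valuation: since $\nu_p(p^k)=k$ while $\nu_p(s-p^k)>k$, the two valuations differ, and therefore
\[
\nu_p(s)=\nu_p\bigl((s-p^k)+p^k\bigr)=\min\bigl(\nu_p(s-p^k),\,\nu_p(p^k)\bigr)=k.
\]
Thus $s\in S$ has valuation exactly $k$, as required. Since $k\in\mathbb{Z}$ was arbitrary, every finite value of the $p$-adic valuation is attained by some element of $S$.

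There is no serious obstacle here; the entire content is that density lets us approximate a fixed point of the desired valuation to within an error of strictly smaller $p$-adic norm, after which the non-archimedean nature of $\mathbb{Q}_p$ forces the approximant to share that valuation. The only point requiring care is the choice of approximation tolerance: the radius must be $p^{-k}$ (with strict inequality), so that the error term $s-p^k$ has strictly larger valuation than $p^k$ and cannot interfere with the leading term. A coarser tolerance could allow $s$ to drift to a neighbouring valuation, so the strict inequality $\nu_p(s-p^k)>k$ is exactly what makes the argument work.
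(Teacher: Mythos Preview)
Your argument is correct and is the standard one: approximate $p^k$ by some $s\in S$ to within $p$-adic distance less than $p^{-k}$, then use the isosceles triangle property of the ultrametric to conclude $\nu_p(s)=k$. The paper does not supply its own proof of this lemma; it is simply quoted from \cite[Lemma~2.1]{garciaetal}, so there is nothing to compare against beyond noting that your proof is exactly the intended elementary verification.
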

 \begin{thm}\cite[Hensel's lemma]{cassels}
 Let $f(x)\in \mathbb{Z}_p[x]$ and $a\in \mathbb{Z}_p$ satisfy $\|f(a)\|_p<\|f'(a)\|_p^2$.  Then there exists an $\alpha \in \mathbb{Z}_p$ such that $f(\alpha)=0$.
 \end{thm}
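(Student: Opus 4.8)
The plan is to prove this strong form of Hensel's lemma by producing a root as the limit of a Newton iteration, exploiting the completeness of $\mathbb{Z}_p$. First I would set $a_0=a$ and define recursively
$$a_{m+1}=a_m-\frac{f(a_m)}{f'(a_m)},$$
and write $\theta=\|f(a)\|_p/\|f'(a)\|_p^2$, which the hypothesis $\|f(a)\|_p<\|f'(a)\|_p^2$ guarantees satisfies $\theta<1$. The central tool is the Taylor expansion of a polynomial over $\mathbb{Z}_p$: for any $b,h\in\mathbb{Z}_p$,
$$f(b+h)=\sum_{k\geq 0}\frac{f^{(k)}(b)}{k!}\,h^k,$$
where each divided derivative $\frac{f^{(k)}(b)}{k!}$ again lies in $\mathbb{Z}_p$, because when $f(x)=\sum_i c_i x^i$ the coefficients that appear are $\binom{i}{k}c_i$ with $\binom{i}{k}\in\mathbb{Z}$. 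In particular every such coefficient has $p$-adic absolute value at most $1$.

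The core of the argument is an induction establishing three facts simultaneously for every $m$: that $a_m\in\mathbb{Z}_p$ and $f'(a_m)\neq 0$, so the iteration is well-defined; that $\|f'(a_m)\|_p=\|f'(a)\|_p$; and that $\|f(a_m)\|_p\leq\|f'(a)\|_p^2\,\theta^{2^m}$. Substituting $h=-f(a_m)/f'(a_m)$ into the Taylor expansion, the constant and linear terms cancel, leaving
$$f(a_{m+1})=\sum_{k\geq 2}\frac{f^{(k)}(a_m)}{k!}\left(\frac{-f(a_m)}{f'(a_m)}\right)^{k},$$
so by the ultrametric inequality $\|f(a_{m+1})\|_p\leq\|f(a_m)\|_p^2/\|f'(a_m)\|_p^2$, which is the source of the quadratic convergence. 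Expanding $f'(a_{m+1})$ in the same way and using that $\|a_{m+1}-a_m\|_p=\|f(a_m)\|_p/\|f'(a_m)\|_p<\|f'(a_m)\|_p$ (a restatement of $\theta<1$) shows, again by the ultrametric inequality, that the absolute value of the derivative is unchanged, which keeps the denominators in all the estimates under control.

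Finally I would deduce that $\{a_m\}$ is Cauchy, since $\|a_{m+1}-a_m\|_p=\|f(a_m)\|_p/\|f'(a)\|_p\leq\|f'(a)\|_p\,\theta^{2^m}\to 0$; by completeness of $\mathbb{Z}_p$ it converges to some $\alpha\in\mathbb{Z}_p$, and continuity of $f$ together with $\|f(a_m)\|_p\to 0$ forces $f(\alpha)=0$. The step I expect to be the main obstacle is the bookkeeping that keeps $\|f'(a_m)\|_p$ constant throughout the iteration: this invariant is precisely what guarantees one never divides by a quantity of smaller valuation than expected, so that the factor $\theta^{2^m}$ genuinely drives the error to zero rather than being cancelled by a shrinking derivative.
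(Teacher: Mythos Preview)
Your Newton-iteration argument is correct and is the standard proof of this strong form of Hensel's lemma; the bookkeeping you outline (invariance of $\|f'(a_m)\|_p$, quadratic decay $\|f(a_m)\|_p\le\|f'(a)\|_p^2\theta^{2^m}$, and hence $a_m\in\mathbb{Z}_p$ at every stage) goes through exactly as you describe. Note, however, that the paper does not supply its own proof of this statement: it is quoted as a known result from Cassels' textbook and used as a black box, so there is no in-paper argument to compare against. Your write-up is essentially the proof one finds in Cassels.
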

\begin{thm}\label{thm6}\cite[Corollary 1.3]{piotr}
	Let $f\colon\mathbb{Z}_p\rightarrow \mathbb{Q}_p$ be an analytic function with a simple zero in $\mathbb{Z}_p$. Then  $R(f(\mathbb{N}))$ is dense in $\mathbb{Q}_p$.
\end{thm}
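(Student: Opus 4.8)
The plan is to exploit the local structure of $f$ at its simple zero. Let $\alpha\in\mathbb{Z}_p$ satisfy $f(\alpha)=0$ and $f'(\alpha)\neq 0$. Since $f$ is analytic, I would factor $f(x)=(x-\alpha)g(x)$ with $g$ analytic on $\mathbb{Z}_p$ and $g(\alpha)=f'(\alpha)\neq 0$. Because $g$ is continuous and $g(\alpha)\neq 0$, there is an integer $N_0$ such that $\|g(x)\|_p=\|f'(\alpha)\|_p$ whenever $\|x-\alpha\|_p\leq p^{-N_0}$; consequently $\|f(x)\|_p=\|x-\alpha\|_p\,\|f'(\alpha)\|_p$ on this disk. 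The point of the factorisation is that near $\alpha$ the function $f$ behaves, up to the nearly constant factor $g$, like the linear map $x\mapsto (x-\alpha)f'(\alpha)$, whose ratios are completely transparent.

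Next I would fix a target $t\in\mathbb{Q}_p$ and $\varepsilon>0$ and produce $m,k\in\mathbb{N}$ with $\|f(m)/f(k)-t\|_p<\varepsilon$. The case $t=0$ is immediate: choosing $k$ with $\nu_p(k-\alpha)$ fixed and $m$ with $\nu_p(m-\alpha)$ arbitrarily large (possible since $\mathbb{N}$ is dense in $\mathbb{Z}_p$) forces $\nu_p(f(m)/f(k))$ to be as large as we like, so $f(m)/f(k)\to 0$. For $t\neq 0$ I would first use density of $\mathbb{N}$ in $\mathbb{Z}_p$ to pick $k\in\mathbb{N}$ with $\nu_p(k-\alpha)=K$ for a large $K$ to be chosen, and then set $\alpha^{*}:=\alpha+t(k-\alpha)$. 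For $K\geq N_0-\nu_p(t)$ one has $\alpha^{*}\in\mathbb{Z}_p$ with $\|\alpha^{*}-\alpha\|_p\leq p^{-N_0}$, so density of $\mathbb{N}$ again yields $m\in\mathbb{N}$ with $\nu_p(m-\alpha^{*})=M$ for any prescribed $M>K$.

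The final step is the estimate. Writing $f(m)/f(k)=\frac{m-\alpha}{k-\alpha}\cdot\frac{g(m)}{g(k)}$, the choice of $m$ gives $m-\alpha=t(k-\alpha)+(m-\alpha^{*})$, whence $\frac{m-\alpha}{k-\alpha}=t+\frac{m-\alpha^{*}}{k-\alpha}$ with $\left\|\frac{m-\alpha^{*}}{k-\alpha}\right\|_p=p^{\,K-M}$, which is small once $M-K$ is large. Simultaneously, since $\|g(k)\|_p=\|f'(\alpha)\|_p$ is fixed and $\|g(m)-g(k)\|_p\leq C\,\|m-k\|_p$ by analyticity, the factor $g(m)/g(k)$ tends to $1$ as $m,k\to\alpha$. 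Combining the two estimates yields $\|f(m)/f(k)-t\|_p<\varepsilon$ once $K$ and $M$ are large enough, so $R(f(\mathbb{N}))$ is dense in $\mathbb{Q}_p$. The \emph{main obstacle}, and the only place where analyticity rather than mere continuity is used, is controlling the perturbation factor $g(m)/g(k)$: one must ensure that driving $m,k$ toward $\alpha$ makes this factor close to $1$ faster than the linear part stabilises to $t$, so that the two approximations do not interfere. This is precisely where the local constancy of $\|g\|_p$ together with the analytic Lipschitz bound on $g$ is essential.
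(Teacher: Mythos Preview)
The paper does not prove this statement; it is quoted without proof as Corollary~1.3 of \cite{piotr} and used only as a black box in the proofs of Theorems~\ref{thm-3} and~\ref{thm-5}. There is therefore no in-paper argument to compare your attempt against.

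On its own merits your sketch is correct. The factorisation $f(x)=(x-\alpha)g(x)$ with $g(\alpha)=f'(\alpha)\neq 0$, the choice $\alpha^{*}=\alpha+t(k-\alpha)$, and the splitting of the error into the linear piece $\frac{m-\alpha^{*}}{k-\alpha}$ and the multiplicative piece $\frac{g(m)}{g(k)}-1$ all go through. One just has to fix the order of quantifiers: choose $K$ large first (so that the Lipschitz bound on $g$, applied with $\|m-k\|_p\leq\max\bigl(\|t-1\|_p\,p^{-K},\,p^{-M}\bigr)$, forces $g(m)/g(k)$ within $\varepsilon/\|t\|_p$ of $1$), and then choose $M>K$ large enough to make $p^{K-M}<\varepsilon$. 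A minor inaccuracy in your closing commentary: analyticity is not used \emph{only} for the Lipschitz control of $g$. The factorisation $f(x)=(x-\alpha)g(x)$ with $g$ continuous at $\alpha$ and $g(\alpha)=f'(\alpha)$ already requires more than continuity of $f$, so the power-series structure enters at that earlier step as well.
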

Recall that a vector $\overline{x}\in \mathbb{F}^r$ such that $F(\overline{x})=0$ is a 
non-singular zero of $F$ if $\frac{\partial F}{\partial x_i}(\overline{x})\neq 0$ for some $i\in \{1, \ldots, r\}$.
We use the following lemma due to Heath-Brown to prove Theorem \ref{thm-5}. This lemma guarantees the existence of a non-singular zero for a particular cubic form modulo $p$.
\begin{lem}\label{lem-7}\cite[Lemma 6]{heath}
	Let $p\neq3$ and suppose that $$f(x,y,z)=ax^3+bxy^2+cy^3+(dx+ey)z^2+fz^3 \in \mathbb{F}_p[x,y,z],$$ with $acf\neq0$. Then $f$ has at least one non-singular zero over $\mathbb{F}_p$, where $\mathbb{F}_p$ is the field with $p$ elements.
\end{lem}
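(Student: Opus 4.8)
The plan is to pass to geometry: a non-singular zero of $f$ over $\mathbb{F}_p$ is exactly a smooth $\mathbb{F}_p$-point of the projective plane cubic $C\colon f=0$ in $\mathbb{P}^2_{\mathbb{F}_p}$. Since $p\neq 3$, Euler's identity $xf_x+yf_y+zf_z=3f$ shows that the singular points of $C$ are precisely the common zeros of the three partials $f_x=3ax^2+by^2+dz^2$, $f_y=2bxy+3cy^2+ez^2$, and $f_z=z(2dx+2ey+3fz)$; the factorization of $f_z$ will be used repeatedly. Thus it suffices to exhibit an $\mathbb{F}_p$-zero of $f$ at which not all of $f_x,f_y,f_z$ vanish.

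First I would dispose of the line at infinity $z=0$, where $f$ restricts to the binary cubic $g(x,y)=ax^3+bxy^2+cy^3$. Since $a,c\neq 0$, every nontrivial zero has $y\neq 0$, so these points correspond to the roots of the \emph{depressed} cubic $at^3+bt+c$ with $t=x/y$ (the $x^2y$ term of $f$ is absent). On $z=0$ one has $f_z=0$ while $f_x,f_y$ restrict to $g_x,g_y$, so by Euler applied to $g$ a root of $g$ is singular on $C$ exactly when it is a multiple root of $g$. Two observations, both using $ac\neq 0$ and $p\neq 3$, then settle all but one case: $g$ has no triple root (matching $g=\lambda(\beta x-\alpha y)^3$ against $f$ forces $\alpha\beta=0$, hence $a=0$ or $c=0$), and, since the roots of $at^3+bt+c$ sum to $0$, a double root $t_0\in\mathbb{F}_p$ forces the remaining simple root $-2t_0$ into $\mathbb{F}_p$. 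Consequently, whenever $at^3+bt+c$ has any root in $\mathbb{F}_p$ it has a simple one, producing a smooth $\mathbb{F}_p$-zero of $f$ on $z=0$. This reduces the lemma to the single case in which $at^3+bt+c$ is \emph{irreducible} over $\mathbb{F}_p$.

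In that remaining case $C$ has no $\mathbb{F}_p$-point at infinity, so I must produce a smooth affine point of $f(x,y,1)=g(x,y)+dx+ey+f=0$. The key structural input is that $f$ is never a cone: requiring the quadratic $x_0f_x+y_0f_y+z_0f_z$ to vanish identically forces, by reading off its $x^2,y^2,z^2$ coefficients $3ax_0$, $3cy_0$, $3fz_0$ and using $acf\neq 0$ and $p\neq 3$, that $x_0=y_0=z_0=0$. Hence $f$ has no point of multiplicity three and, in particular, is not a product of three concurrent lines. Combined with the fact that irreducibility of $g$ rules out any $\mathbb{F}_p$-rational linear factor of $f$ (such a factor would meet $z=0$ in an $\mathbb{F}_p$-root of $g$) and any repeated component, the goal is then to show that $C$ is geometrically irreducible: a geometrically irreducible plane cubic has at most one singular point, so by the Aubry--Perret form of the Hasse--Weil bound $\#C(\mathbb{F}_p)\geq p+1-2\sqrt{p}-1$, which exceeds the number of singular points once $p$ is not too small, and a smooth point follows.

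The main obstacle is exactly this last case. The difficulty is twofold: excluding the degenerate possibility that $C$ is a union of three conjugate, non-concurrent lines forming a triangle --- a configuration whose only $\mathbb{F}_p$-points would be singular vertices, or which could even be anisotropic (a cubic norm form), in which case the conclusion would fail --- and guaranteeing a smooth point uniformly for \emph{all} $p\neq 3$, including the small primes where the Hasse--Weil estimate is too weak to conclude by itself (these few primes would be checked directly). Both are governed by the special shape of $f$: the absent $x^2y$ term together with $acf\neq 0$ is precisely what forbids the cone and the bad conjugate-line configurations, and making this exclusion explicit is the crux of the argument.
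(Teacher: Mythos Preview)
The paper does not prove this lemma at all: it is quoted as \cite[Lemma 6]{heath} and used as a black box in the proof of Theorem~\ref{thm-5}. There is therefore no proof in the paper to compare your attempt against.

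On the merits of your sketch: the reduction to the case where $at^3+bt+c$ is irreducible over $\mathbb{F}_p$ is correct and nicely done (the missing $x^2y$ term forces the roots of $g$ on $z=0$ to sum to zero, so a rational double root drags the simple root into $\mathbb{F}_p$), and the ``no cone'' observation is fine. But you stop precisely at the point that carries the weight. You yourself say that excluding the conjugate-triangle/norm-form degeneration and handling the small primes ``is the crux of the argument,'' and you do not actually perform either step. A cubic norm form from $\mathbb{F}_{p^3}$ is anisotropic, so if some $f$ of the stated shape happened to be such a norm form the lemma would be false; you assert that the vanishing of the $x^2y, x^2z, y^2z, xyz$ coefficients together with $acf\neq 0$ rules this out, but give no computation. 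Likewise, for $p=2,5,7$ (where $p+1-2\sqrt{p}\leq 1$) the Hasse--Weil input you invoke does not by itself guarantee a smooth point even on a geometrically irreducible cubic, and you defer this to an unspecified check. As written, then, this is a plausible strategy rather than a proof; to complete it you must make both of those steps explicit, or consult Heath-Brown's original argument.
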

\section{Proofs of the results}
We first prove a lemma which will be used to prove Theorem \ref{thm-3}.
\begin{lem}\label{lem-1} Let $p$ be a prime, $k\geq 1$ and $u\in \mathbb{Z}_p^\times$. Then $u$ is a $p^k$th power in $\mathbb{Z}_p$ if and only if $u$ is a $p^k$th power residue modulo $p^{k+\nu_p(2)+1}$ .
\end{lem}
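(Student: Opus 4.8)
The plan is to use the standard structure theory of $\mathbb{Z}_p^\times$, together with Hensel's lemma, to reduce the assertion to an elementary congruence condition. First I would treat the two sides of the equivalence. The forward implication is trivial: if $u = v^{p^k}$ with $v \in \mathbb{Z}_p^\times$, then reducing modulo any power of $p$ shows $u$ is a $p^k$th power residue modulo $p^{k+\nu_p(2)+1}$, so there is nothing to prove. The substance is the converse: assuming $u \equiv w^{p^k} \pmod{p^{k+\nu_p(2)+1}}$ for some $w \in \mathbb{Z}_p^\times$, one must produce an exact $p^k$th root in $\mathbb{Z}_p$. After dividing by $w^{p^k}$ we may assume $w = 1$, i.e. $u \equiv 1 \pmod{p^{k+\nu_p(2)+1}}$, and we must show $u$ is a $p^k$th power.

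The key step is to apply Hensel's lemma to $f(x) = x^{p^k} - u$. We have $f'(x) = p^k x^{p^k - 1}$, so for $a \in \mathbb{Z}_p^\times$ one has $\|f'(a)\|_p = p^{-k}$, hence $\|f'(a)\|_p^2 = p^{-2k}$. Taking $a = 1$ (legitimate since $u \equiv 1$ modulo a positive power of $p$, so $1$ is a good first approximation), we need $\|f(1)\|_p = \|1 - u\|_p < p^{-2k}$, i.e. $\nu_p(u - 1) > 2k$. For $p$ odd this requires $\nu_p(u-1) \geq 2k+1$, while the hypothesis only gives $\nu_p(u-1) \geq k + 1$, which is not enough when $k \geq 1$; so a naive single application of Hensel's lemma does not close the gap. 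The fix is to iterate: the map $x \mapsto x^{p^k}$ on the group $1 + p^m\mathbb{Z}_p$ (for $m \geq \nu_p(2)+1$, so that $\log$ is an isomorphism onto $p^m\mathbb{Z}_p$, or more elementarily by the binomial expansion) sends $1 + p^m\mathbb{Z}_p$ onto $1 + p^{m+k}\mathbb{Z}_p$ bijectively, since $\nu_p\big((1+p^m t)^{p^k} - 1 - p^{m+k} t\big) > m+k$. Thus starting from $u \in 1 + p^{k+\nu_p(2)+1}\mathbb{Z}_p = 1 + p^{(\nu_p(2)+1) + k}\mathbb{Z}_p$, we find $u_1 \in 1 + p^{\nu_p(2)+1}\mathbb{Z}_p$ with $u_1^{p^k} \equiv u \pmod{p^{\nu_p(2)+k+2}}$, and repeating produces a Cauchy sequence whose limit $v$ satisfies $v^{p^k} = u$.

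Alternatively, and perhaps more cleanly for the writeup, I would argue directly via the binomial series: for $v = (1+x)^{1/p^k}$ with $x = u - 1$, the series $\sum_{j\geq 0}\binom{1/p^k}{j} x^j$ converges in $\mathbb{Z}_p$ provided $\nu_p\big(\binom{1/p^k}{j} x^j\big) \to \infty$, and the worst denominators $\binom{1/p^k}{j}$ contribute valuation at least $-k\,\nu_p(j!) \geq -k\cdot\frac{j-1}{p-1} \geq -k(j-1)$ (for $p$ odd; for $p=2$ the refined estimate uses $\nu_2(j!) = j - s_2(j) \leq j - 1$ and the extra factor $\nu_p(2) = 1$ in the hypothesis absorbs the loss), so $\nu_p(u-1) \geq k + \nu_p(2) + 1$ forces the general term to have valuation tending to infinity, and the sum $v$ is the desired $p^k$th root. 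The main obstacle is precisely this bookkeeping at $p = 2$: one has to verify that the single extra factor of $2$ in the exponent $k + \nu_p(2) + 1 = k + 2$ exactly compensates the failure of the crude estimate $\nu_2(j!) \leq \frac{j-1}{p-1} = j-1$ to be strict, and that $2$ is the sharp constant here (which is why the statement carries $\nu_p(2)$ rather than a larger slack). I would isolate this as the one place where the cases $p$ odd and $p = 2$ genuinely diverge, and handle the odd case with the clean estimate $\nu_p(j!) \leq \frac{j-1}{p-1} \leq \frac{j-1}{2}$, reducing the required valuation of $u-1$ to $k+1$.
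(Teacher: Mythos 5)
Your main argument is correct and is essentially the paper's own proof: both hinge on the congruence $\left(1+cp^{s}\right)^{p^k}\equiv 1+cp^{s+k}$ modulo higher powers of $p$ for $s\geq\nu_p(2)+1$, used to lift the approximate $p^k$th root step by step (the paper lifts only until the modulus $p^{2k+1}$ and then invokes Hensel's lemma, while you carry the iteration to the limit, i.e.\ prove surjectivity of $x\mapsto x^{p^k}$ from $1+p^{\nu_p(2)+1}\mathbb{Z}_p$ onto $1+p^{k+\nu_p(2)+1}\mathbb{Z}_p$ directly — an inessential difference). One small slip in your optional binomial-series variant: $\nu_p\binom{1/p^k}{j}=-jk-\nu_p(j!)$, not $-k\,\nu_p(j!)$; the approach still works because $\nu_p(x^j)\geq j\left(k+\nu_p(2)+1\right)$, leaving $j\left(\nu_p(2)+1\right)-\nu_p(j!)\to\infty$.
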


\begin{proof}  We first prove for any non-negative integer $\ell$ that if  $u\in \mathbb{Z}_p^\times $ is a $p^k$th power residue modulo $p^{k+\nu_p(2)+\ell+1}$, then $u$ is a $p^k$th power residue modulo $p^{k+\nu_p(2)+\ell+2}$. Suppose that $u\equiv a^{p^k}\pmod{p^{k+\nu_p(2)+\ell+1}}$ for some $a\in \mathbb{Z}_p.$ Then $a\in \mathbb{Z}_p^\times$ and $u/a^{p^k}\equiv 1\pmod{p^{k+\nu_p(2)+\ell+1}}$. Hence $$u/a^{p^k}\equiv 1+cp^{k+\nu_p(2)+\ell+1} \pmod{p^{k+\nu_p(2)+\ell+2}},$$ where $0\leq c \leq p-1$. We have $$\left(1+cp^{\nu_p(2)+\ell+1}\right)^{p^k}\equiv 1+cp^{k+\nu_p(2)+\ell+1} \pmod{p^{k+\nu_p(2)+\ell+2}}.$$ Therefore, $$u\equiv a^{p^k}\left(1+cp^{\nu_p(2)+\ell+1}\right)^{p^k} \pmod{p^{k+\nu_p(2)+\ell+2}}.$$
Thus, $u$ is a $p^k$th power residue modulo $p^{k+\nu_p(2)+\ell+2}$.
\par 
Let $u\in \mathbb{Z}_p^\times$. Suppose that $u$ is a $p^k$th power residue modulo $p^{k+\nu_p(2)+1}$.	Then, using the above fact repeatedly we find that $u$ is a $p^k$th power residue modulo $p^{2k+1}$. Hence, by Hensel's lemma, $u$ is a $p^k$th power in $\mathbb{Z}_p$. Conversely, suppose that $u$ is a $p^k$th power in $\mathbb{Z}_p$. Since $u\in \mathbb{Z}_p^\times$, we infer that $u$ is a $p^k$th power residue modulo $p^{k+\nu_p(2)+1}$. This completes the proof of the lemma.
\end{proof}

Having Lemma \ref{lem-1} showed, we are ready to prove Theorem \ref{thm-3}.

\begin{proof}[Proof of Theorem \ref{thm-3}]
We first prove part $(1)$ of the theorem.
Let $n=p^km$ where $p\nmid m$.
Suppose that $-a^{-1}b$ is an $n$th power residue modulo $p^{\nu_p(n)+\nu_p\left(2^{[2\mid n]}\right)+1}$. Then, $-a^{-1}b$ is a $p^k$th power residue modulo $p^{\nu_p(n)+\nu_p\left(2^{[2\mid n]}\right)+1}$. By Lemma \ref{lem-1}, $-a^{-1}b$ is a $p^k$th power in $\mathbb{Z}_p$. Also, by Hensel's lemma, $-a^{-1}b$ is an $m$th power in $\mathbb{Z}_p$. Therefore, $-a^{-1}b$ is an $n$th power in $\mathbb{Z}_p$. Thus $-a^{-1}b=u^n$ for some $u\in \mathbb{Z}_p$. This implies that $u$ is a root of the polynomial $f(x)=x^n+a^{-1}b$. Clearly, $f'(u)=nu^{n-1}\neq 0$, and hence the polynomial $f(x)=x^n+a^{-1}b$ has a simple root in $\mathbb{Z}_p$. Therefore, by Lemma \ref{thm6}, $R(f(\mathbb{N}))\subset R(F)$ is dense in $\mathbb{Q}_p$. Hence $R(F)$ is dense in $\mathbb{Q}_p$. Conversely, the denseness of $R(F)$ in $\mathbb{Q}_p$ implies that $-a^{-1}b$ is an $n$th power in $\mathbb{Z}_p$. Therefore, $-a^{-1}b$ is an $n$th power residue modulo $p^{\nu_p(n)+\nu_p\left(2^{[2\mid n]}\right)+1}$. 
\par We next prove part (2) of the theorem. Let $a=p^k\ell$, where $p\nmid \ell$ and $n|k$. We write $k=nk'$.
Suppose that $-b^{-1}\ell$ is not an $n$th power residue modulo $p^{\nu_p(n)+\nu_p\left(2^{[2\mid n]}\right)+1}$. Let $\widetilde{F}(x,y)=b^{-1}\ell x^n+y^n$. By the first part of the theorem, $R(\widetilde{F})$ is not dense in $\mathbb{Q}_p$. We have $b^{-1}F(x, y)=p^kb^{-1}\ell x^n+y^n=b^{-1}\ell(p^{k^\prime}x)^n+y^n=\widetilde{F}(p^{k^\prime}x, y)$. 
Since $R(F)=R(b^{-1}F)\subset R(\widetilde{F})$, $R(F)$ is not dense in $\mathbb{Q}_p$.
 \par 
Conversely, suppose that $-b^{-1}\ell$ is an $n$th power residue modulo $p^{\nu_p(n)+\nu_p\left(2^{[2\mid n]}\right)+1}$. We have $b^{-1}F(x,y)=b^{-1}\ell p^{nk^\prime}x^n+y^n$. Using part (1) of the theorem, we have that $R(\widetilde{F})$ is dense in $\mathbb{Q}_p$, where  $\widetilde{F}(x,y)=b^{-1}\ell x^n+y^n$. Since 
 \begin{align*}
\frac{\widetilde{F}(x,y)}{\widetilde{F}(z,w)}=\frac{p^{nk^\prime} \widetilde{F}(x,y)}{p^{nk^\prime}\widetilde{F}(z,w)}
=\frac{\widetilde{F}(p^{k^\prime}x,p^{k^\prime}y)}{\widetilde{F}(p^{k^\prime}z,p^{k^\prime}w)}
=\frac{F(x,p^{k^\prime}y)}{F(z,p^{k^\prime}w)},
 \end{align*}
 therefore $R(F)$ is dense in $\mathbb{Q}_p$. This completes the proof of part (2) of the theorem.
 \par Finally, we prove part (3) of the theorem. Here $a=p^k\ell, p\nmid \ell$ and $n\nmid k$. Suppose that $R(F)$ is dense in $\mathbb{Q}_p$. Let $\widetilde{F}(x, y)=b^{-1}F(x, y)$. Then $R(\widetilde{F})$ is dense in $\mathbb{Q}_p$. 
 Choose  an $m$ which is not an $n$th power residue modulo $p$. There exist $x,y,z,w \in \mathbb{Z}$ not all multiples of $p$ such that
 	\begin{align*}
 	 \left\Vert\frac{\widetilde{F}(x,y)}{\widetilde{F}(z,w)}-m\right\Vert_p < \frac{1}{p^k}.
 	\end{align*}
 	This yields
 	\begin{align*} 
 	 \left\Vert y^n-mw^n+p^kb^{-1}\ell (x^n-mz^n) \right\Vert_p =\left\Vert \widetilde{F}(x,y)-m\widetilde{F}(z,w)\right\Vert_p<&\ \frac{\left\Vert \widetilde{F}(z,w)\right\Vert_p}{p^k}\leq  \frac{1}{p^k}.
  	\end{align*}
  	If $p\nmid y$ or $p\nmid w$, then $y^n-mw^n \not\equiv 0 \pmod{p}$ since $m$ is not an $n$th power residue modulo $p$. Hence $\left\Vert \widetilde{F}(x,y)-m\widetilde{F}(z,w)\right\Vert_p=1$ which is a contradiction. 
  	Therefore $p\mid y$ and $p\mid w$, which gives $\nu_p(y^n-mw^n)=nt$, where $t$ is a positive integer. Since $p\mid y$ and $p\mid w$, we have either $p\nmid x$ or $p\nmid z$. This yields $\nu_p(x^n-mz^n)=0$. Since $n\nmid k$, we have that $\widetilde{F}(x, y)-m\widetilde{F}(z, w)$ is the sum of a $p$-adic integer with valuation being a positive multiple of $n$ and a $p$-adic integer with valuation equal to $k$.
  	Hence $\left\Vert \widetilde{F}(x,y)-m\widetilde{F}(z,w)\right\Vert_p \geq p^{-k}$, which gives a contradiction. Consequently, $R(\widetilde{F})$ is not dense in $\mathbb{Q}_p$. Finally, $R(F)$ is not dense in $\mathbb{Q}_p$.
  	This completes the proof of the theorem.
 \end{proof} 
 
\begin{proof}[Proof of Corollary \ref{cor-new}]
 \par We start with the proof of part $(1)$. Let $a=p^{k_1}\ell_1$ and $b=p^{k_2}\ell_2$, where $k_1\equiv k_2\pmod{n}$ and $p\nmid\ell_1\ell_2$. Without loss of generality assume that $k_1\geq k_2$. Let $\widetilde{F}(x,y)=p^{k_1-k_2}\ell_1x^n+\ell_2y^n$. Since $\gcd(n, p(p-1))=1$, every integer is an $n$th power residue modulo $p$. Therefore we can apply Theorem \ref{thm-3}(2) to conclude that $R(\widetilde{F})$ is dense in $\mathbb{Q}_p$. At last, we see that  
 \begin{align*}
\frac{\widetilde{F}(x,y)}{\widetilde{F}(z,w)}=\frac{p^{k_2} \widetilde{F}(x,y)}{p^{k_2}\widetilde{F}(z,w)}
=\frac{p^{k_1}\ell_1x^n+p^{k_2}\ell_2y^n}{p^{k_1}\ell_1z^n+p^{k_2}\ell_2w^n}
=\frac{F(x,y)}{F(z,w)},
 \end{align*}
which means that $R(F)=R(\widetilde{F})$ is dense in $\mathbb{Q}_p$.
 \par For the proof of (2), we use (1) to claim that $R(a_ix_i^n+a_jx_j^n)$ is dense in $\mathbb{Q}_p$. Since $R(a_ix_i^n+a_jx_j^n)\subset R(F)$, we see that $R(F)$ is dense in $\mathbb{Q}_p$.
\end{proof}

\begin{proof}[Proof of Theorem \ref{thm-new}]
	At first, recall that $\mathbb{Z}$ is dense in $\mathbb{Z}_p$ and $F$ and the operation of division are continuous in $\mathbb{Q}_p$. This implies that $R(F)=R(F(\mathbb{Z}^r))$ is dense in $R(F(\mathbb{Z}_p^r))$. Hence, it suffices to prove that $R(F(\mathbb{Z}_p^r))=\mathbb{Q}_p$.
	\par 
	Since $\gcd(n,p-1)=1$, every $c\in\mathbb{Z}_p^\times$ is an $n$-th power modulo $p$. In other words, the polynomial $f(x)=x^n-c$ has a root $u$ modulo $p$. Then $u\in\mathbb{Z}_p^\times$. Since $\gcd(n,p)=1$, we have $f'(u)=nu^{n-1}\neq 0\pmod{p}$ and by Hensel's lemma we conclude that $f$ has a root in $\mathbb{Z}_p$, i.e. $c$ is an $n$-th power in $\mathbb{Z}_p$. Since the set $\mathbb{Z}_p^\times$ is contained in the set of $n$-th powers in $\mathbb{Z}_p$, the set of $n$-th powers in $\mathbb{Z}_p$ is exactly the set of $p$-adic integers with $p$-adic valuation divisible by $n$.
	\par 
	Next, let $d\in\mathbb{Q}_p$ be arbitrary. We will show that $d=\frac{a_i}{a_j}p^{kn}u^n$ for some $i,j\in\{1,2,\ldots, r\}$, $k\in\mathbb{Z}$ and $u\in\mathbb{Z}_p^\times$.  Consider the sets
	\begin{align*}
	\{\nu_p(a_i)\pmod{n}: &\ i\in\{1,2,\ldots, r\}\},\\
	\{(\nu_p(d)+\nu_p(a_j))\pmod{n}: &\ j\in\{1,2,\ldots, r\}\}.
	\end{align*}
	Both of them have cardinality $r>\frac{n}{2}$. Hence, they have non-empty intersection, i.e. $\nu_p(d)+\nu_p(a_j)\equiv\nu_p(a_i)\pmod{n}$ for some $i,j\in\{1,2,\ldots ,r\}$. Thus $n\mid\nu_p\left(d\frac{a_j}{a_i}\right)$, in other words $d\frac{a_j}{a_i}=p^{kn}c$ for some $k\in\mathbb{Z}$ and $c\in\mathbb{Z}_p^\times$. We already know that $c=u^n$ for some $u\in\mathbb{Z}_p^\times$. As a result, $d\frac{a_j}{a_i}=p^{kn}u^n$, or equivalently, $d=\frac{a_i}{a_j}p^{kn}u^n$. Finally, we can write
	\begin{align*}
	d=
	\begin{cases}
	\frac{a_i(p^ku)^n}{a_j}, &\ \text{ if } k\geq 0,\\
	\frac{a_iu^n}{a_jp^{-kn}}, &\ \text{ if } k<0.
	\end{cases}
	\end{align*}
	This means that $d\in R(F(\mathbb{Z}_p^r))$.
	\par 
	We showed that $R(F(\mathbb{Z}_p^r))=\mathbb{Q}_p$, which means that $R(F)$ is dense in $\mathbb{Q}_p$.
\end{proof}

\begin{proof}[Proof of Corollary \ref{cor-new2}]
	Corollary \ref{cor-new} and Theorem \ref{thm-new} show that if $F$ is an integral diagonal form of degree $n$ depending essentially on more than $\frac{n}{2}$ variables, then $R(F)$ is dense in $\mathbb{Q}_p$. It suffices to find an integral diagonal form $F_0$ of degree $n$ depending on $\left\lfloor\frac{n}{2}\right\rfloor$ variables such that $R(F_0)$ is not dense in $\mathbb{Q}_p$.
	\par 
	Consider $$F_0(x_1,x_2,x_3,\ldots ,x_{\left\lfloor\frac{n}{2}\right\rfloor})=x_1^n+px_2^n+p^2x_3^n+\cdots +p^{\left\lfloor\frac{n}{2}\right\rfloor -1}x_{\left\lfloor\frac{n}{2}\right\rfloor}^n.$$ Then, for each $x=\left(x_1,x_2,x_3,\ldots ,x_{\left\lfloor\frac{n}{2}\right\rfloor}\right)\in\mathbb{Z}^{\left\lfloor\frac{n}{2}\right\rfloor}$ we have 
	\begin{align*}
	\nu_p(F(x)) &\ =\min_{1\leq i\leq \left\lfloor\frac{n}{2}\right\rfloor} \nu_p(p^{i-1}x_i^n)\\
	&\ =\min_{1\leq i\leq \left\lfloor\frac{n}{2}\right\rfloor} (i-1+n\nu_p(x_i))\in\left\{0,1,2,\ldots, \left\lfloor\frac{n}{2}\right\rfloor -1\right\}\pmod{n}.
	\end{align*}
	Hence, $\nu_p\left(\frac{F(x)}{F(y)}\right)\not\equiv\left\lfloor\frac{n}{2}\right\rfloor\pmod{n}$ for any $x,y\in\mathbb{Z}^{\left\lfloor\frac{n}{2}\right\rfloor}$. By Lemma \ref{lem2}, $R(F_0)$ is not dense in $\mathbb{Q}_p$.
\end{proof}

\begin{rem}
{\rm Note that there exists an integral diagonal form $F$ of degree $n$ depending on $\left\lfloor\frac{n}{2}\right\rfloor$ variables such that $R(F)$ is dense in $\mathbb{Q}_p$.
\par 
Consider $$F(x_1,x_2,x_3,\ldots ,x_{\left\lfloor\frac{n}{2}\right\rfloor -1},x_{\left\lfloor\frac{n}{2}\right\rfloor})=x_1^n+px_2^n+p^2x_3^n+\cdots +p^{\left\lfloor\frac{n}{2}\right\rfloor -2}x_{\left\lfloor\frac{n}{2}\right\rfloor -1}^n+p^{\left\lfloor\frac{n}{2}\right\rfloor}x_{\left\lfloor\frac{n}{2}\right\rfloor}^n.$$ Then, for each $x=\left(x_1,x_2,x_3,\ldots ,x_{\left\lfloor\frac{n}{2}\right\rfloor}\right)\in\mathbb{Z}^{\left\lfloor\frac{n}{2}\right\rfloor}$ we have 
	\begin{align*}
	\nu_p(F(x))\in\left\{0,1,2,\ldots, \left\lfloor\frac{n}{2}\right\rfloor -2,\left\lfloor\frac{n}{2}\right\rfloor\right\}\pmod{n}.
	\end{align*}}
Thus, every finite value can be attained as the $p$-adic valuation of $\left(\frac{F(x)}{F(y)}\right)$ for some $x,y\in\mathbb{Z}^{\left\lfloor\frac{n}{2}\right\rfloor}$. Using similar reasoning to the one from the proof of Theorem \ref{thm-new}, one can show that $R(F)$ is dense in $\mathbb{Q}_p$.
\end{rem}
 \begin{proof}[Proof of Theorem \ref{thm-5}] We have $C(x,y,z)=ax^3+by^3+cz^3$. If $p\neq 3$, then 
 by Lemma \ref{lem-7}, $C(x,y,z)$ has a non-singular solution $(x_0,y_0,z_0)$ modulo $p$. Suppose that $C(x_0,y_0,z_0)=0 \pmod{p}$  and $\frac{\partial C}{\partial x}(x_0,y_0,z_0)\neq 0\pmod{p}$. Then by Hensel's lemma, the  polynomial $f(x)=C(x,y_0,z_0)$ has a simple root in $\mathbb{Z}_p.$ Therefore, by Theorem \ref{thm6}, $R(f(\mathbb{N}))$  is dense in $\mathbb{Q}_p$. Hence, $R(C)\supset R(f(\mathbb{N})) $ is dense in $\mathbb{Q}_p$.
 \end{proof}
\begin{proof}[Proof of Corollary \ref{cor-1}]
	Write $a_i=p^{m_i}\ell_i$, $a_j=p^{m_j}\ell_j$ and $a_k=p^{m_k}\ell_k$, where $m_i\equiv m_j\equiv m_k\pmod{3}$ and $p\nmid\ell_i\ell_j\ell_k$. Without loss of generality assume that $m_i\geq m_j\geq m_k$. Put $\widetilde{C}(x_i,x_j,x_k)=a_ix_i^3+a_jx_j^3+a_kx_k^3$ and $\widehat{C}(x_i,x_j,x_k)=\ell_ix_i^3+\ell_jx_j^3+\ell_kx_k^3$. Then
 \begin{align*}
&\frac{a_i
	{\left(p^{\frac{m_k-m_i}{3}}x_i\right)}^3+a_j
	{\left(p^{\frac{m_k-m_j}{3}}x_j\right)}^3+a_k
	{x_k}^3 }{a_i
	{\left(p^{\frac{m_k-m_i}{3}}y_i\right)}^3+a_j
	{\left(p^{\frac{m_k-m_j}{3}}y_j\right)}^3+a_k
	{y_k}^3}
=\frac{\ell_ix_i^3+\ell_jx_j^3+\ell_kx_k^3}{\ell_iy_i^3+\ell_jy_j^3+\ell_ky_k^3},
 \end{align*}	
which means that $R(\widehat{C})\subset R(\widetilde{C}(\mathbb{Q}^3))=R(\widetilde{C}(\mathbb{Z}^3))=R(\widetilde{C})\subset R(C)$. From Theorem \ref{thm-5}, $R(\widehat{C})$ is dense in $\mathbb{Q}_p$. As a result, $R(C)$ is dense in $\mathbb{Q}_p$.
\end{proof}
\begin{proof}[Proof of Corollary \ref{cor-2}]
It suffices to note that there are $1\leq i<j<k\leq 7$ such that $\nu_p(a_i)\equiv\nu_p(a_j)\equiv\nu_p(a_k)\pmod{3}$. Then by Corollary \ref{cor-1}, $R(a_ix_i^3+a_jx_j^3+a_kx_k^3)$ is dense in $\mathbb{Q}_p$. Since $R(a_ix_i^3+a_jx_j^3+a_kx_k^3)\subset R(C)$, we see that $R(C)$ is dense in $\mathbb{Q}_p$.
\end{proof}
\begin{proof}[Proof of Theorem \ref{thm-4}]
	We claim that $\nu_p(F(\overline{x}))$ is a multiple of $n$ for all $\overline{x}$, where $\overline{x}=(x_1,x_2,\dots,x_r)\in \mathbb{Z}^r$. If $F(\overline{x})\not\equiv 0\pmod{p}$, then $\nu_p(F(\overline{x}))=0$. Suppose that $F(\overline{x})\equiv0\pmod{p}$. 
	Therefore $\overline{x}\equiv \overline{0}\pmod{p}$ as $F$ is anisotropic. Let $k=\min\{\nu_p(x_i)\colon i\in\{1,\dots,r\}\}$. Then, we get	$\nu_p(F(\overline{x}))=nk$
	since $F$ is anisotropic. Thus, by Lemma \ref{lem2}, $R(F)$ is not dense in  $\mathbb{Q}_p$.
\end{proof}


\end{document}